\newcommand{\R}{\mathbb{R}}
\providecommand{\e}{\mathrm{e}}
\providecommand{\e}{\mathrm{e}}
\providecommand{\abs}[1]{\lvert #1 \rvert}
\providecommand{\norm}[1]{\lVert #1 \rVert}
\providecommand{\bary}{\mathop\mathrm{bar}\nolimits}
\providecommand{\tq}{\,\vert\,}
\providecommand{\conv}{\mathop\mathrm{conv}\nolimits}
\providecommand{\vol}{\mathop\mathrm{vol}\nolimits}
\providecommand{\sca}[1]{\langle #1 \rangle}
\newtheorem{theorem}{Theorem}
\newtheorem{corollary}[theorem]{Corollary}
\newtheorem{lemma}[theorem]{Lemma}
\theoremstyle{definition}
\title{A direct proof of the functional Santal\'o inequality}
\author{Joseph Lehec
\footnote{
LAMA (UMR CNRS 8050)
Universit\'e Paris-Est.
}}
\date{June 2008}
\begin{document}
\maketitle
\begin{abstract}
We give a simple proof of a functional version of the Blaschke-Santal\'o inequality due to Artstein, Klartag and Milman. The proof is by induction on the dimension and does not use the Blaschke-Santal\'o inequality.
\bigskip

\noindent
Published in C. R. Acad. Sci. Paris, S\'er.~I  347 (2009) 55--58.
\end{abstract}

\section{Introduction}
For $x,y\in \R^n$, we denote their inner product by $\sca{x,y}$ and the Euclidean norm of $x$ by $\abs{x}$. If $A$ is a subset of $\R^n$, we let $A^\circ= \{ x\in \R^n \tq \forall y\in A ,\, \sca{x,y} \leq 1\}$ be its polar body. The Blaschke-Santal\'o inequality states that any convex body $K$ in $\R^n$ with center of mass at $0$ satisfies
\begin{equation}
\label{classique-santalo}
 \vol_n (K)  \vol_n (K^\circ) \leq  \vol_n(D) \vol_n (D^\circ) = v_n^2 , 
\end{equation}
where $\vol_n$ stands for the volume, $D$ for the Euclidean ball and $v_n$ for its volume. Let $g$ be a non-negative Borel function on $\R^n$ satisfying $0<\int g<\infty$ and $\int \abs{x} g(x) \, dx  < \infty $, then
$\bary(g) = \bigl( \int g \bigr)^{-1} \bigl( \int g(x) x  \, dx \bigr)$
denotes its center of mass (or barycenter). The center of mass (or centroid) of a measurable subset of $\R^n$ is by definition the barycenter of its indicator function.

Let us state a functional form of \eqref{classique-santalo} due to Artstein, Klartag and Milman~\cite{artstein-klartag-milman}. If $f$ is a non-negative Borel function on $\R^n$, the polar function of $f$ is the log-concave function defined by 
\[ f^\circ (x) = \inf_{y\in \R^n} \bigl( \e^{-\sca{x,y} } f(y)^{-1} \bigr) \]
\begin{theorem}[Artstein, Klartag, Milman]
\label{AKM}
If $f$ is a non-negative integrable function on $\R^n$ such that $f^\circ$ has its barycenter at $0$, then
\[
\int_{\R^n}f(x) \, dx   \int_{\R^n}f^\circ(y) \, dy   \leq 
        \bigl( \int_{\R^n} \e^{-\frac{1}{2}\abs{x}^2 } \, dx \bigr)^2
   = (2 \pi)^n .
\]
\end{theorem}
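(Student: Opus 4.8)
The plan is to prove the inequality by induction on the dimension $n$, after two normalisations. First, since $(f^\circ)^\circ\geq f$ pointwise while $((f^\circ)^\circ)^\circ=f^\circ$, replacing $f$ by $(f^\circ)^\circ$ only increases $\int f$ and leaves $f^\circ$ (hence the barycenter hypothesis) untouched; so I may assume $f=\e^{-\phi}$ is log-concave, with $f^\circ=\e^{-\phi^*}$ where $\phi^*$ is the Legendre transform, and a routine approximation lets me take $\phi$ smooth and superlinear. The engine will be a one-dimensional lemma: if $F,G\geq0$ on $\R$ satisfy $F(s)G(t)\leq \e^{-st}$ for all $s,t$, then
\[ \int_\R F \int_\R G \leq 2\pi\,\exp\bigl(-\tfrac12\bary(F)\,\bary(G)\bigr), \]
with equality for shifted, rescaled Gaussians. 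When $\bary(G)=0$ the correction disappears and this is exactly the assertion of the theorem in dimension one, which settles the base case.

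For the inductive step I split $\R^n=\R\times\R^{n-1}$, write $x=(s,x')$ and $y=(t,y')$, and form the marginals $M(x')=\int_\R f(s,x')\,ds$ and $N(y')=\int_\R f^\circ(t,y')\,dt$. By Pr\'ekopa's theorem $M$ and $N$ are log-concave, $\int M=\int f$ and $\int N=\int f^\circ$, and the $(n-1)$-dimensional barycenter of $N$ lies at the origin because it is the projection of $\bary(f^\circ)=0$ onto the last $n-1$ coordinates. Fixing $x'$ and $y'$, the fibres $F(s)=f(s,x')$ and $G(t)=f^\circ(t,y')$ satisfy $F(s)G(t)\leq \e^{-\sca{x',y'}}\e^{-st}$; absorbing the constant and applying the one-dimensional lemma gives
\[ M(x')\,N(y') \leq 2\pi\,\exp\bigl(-\sca{x',y'}-\tfrac12\,\beta(x')\,\gamma(y')\bigr), \]
where $\beta(x')$ and $\gamma(y')$ are the barycenters of the two fibres. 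If the extra factor were absent, then $M/\sqrt{2\pi}$ and $N/\sqrt{2\pi}$ would be a polar-type pair and the induction hypothesis would give $\int M\int N\leq(2\pi)^n$, i.e. the theorem.

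The main obstacle is precisely this correction $\exp(-\tfrac12\beta(x')\gamma(y'))$: the conditional barycenter $\gamma(y')$ of $f^\circ$ along the fibre over $y'$ is generally nonzero, even though its $N$-average vanishes by the global hypothesis $\bary(f^\circ)=0$, so the clean polarity needed for the induction is destroyed fibrewise. My strategy is therefore to prove the stronger, self-improving statement that carries the correction, namely that for every pair with $f(x)f^\circ(y)\leq\e^{-\sca{x,y}}$ in dimension $n$ one has
\[ \int_{\R^n} f \int_{\R^n} f^\circ \leq (2\pi)^n\,\exp\bigl(-\tfrac12\sca{\bary(f),\bary(f^\circ)}\bigr); \]
this is tight for shifted Gaussians and reduces to the theorem when $\bary(f^\circ)=0$. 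The crux is to check that the bilinear fibre-correction is compatible with this stronger hypothesis.

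I expect to handle this last point by exploiting the $GL_n$-invariance of the problem: a linear change of variables preserves both the product $\int f\int f^\circ$ and the vanishing of $\bary(f^\circ)$, and a well-chosen shear annihilates the linear part of the map $y'\mapsto\gamma(y')$ (this is exactly what centres the fibres of the Gaussian extremisers). The genuinely nonlinear remainder should then be controlled through the centering identity $\int\gamma(y')\,N(y')\,dy'=0$ together with the convexity furnished by log-concavity. Turning the twisted coupling back into an honest polarity so that the $(n-1)$-dimensional hypothesis applies is the heart of the matter; once that reconciliation is in place, the rest is bookkeeping. I note finally that a naive Pr\'ekopa--Leindler attempt with midpoints fails here, since the cross term $\sca{x,y}$ is unbounded along the fibre directions — it is exactly the barycenter hypothesis, entering through the one-dimensional lemma, that tames it.
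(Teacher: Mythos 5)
Your strategy hinges on the one-dimensional lemma asserting that $F(s)G(t)\le \e^{-st}$ for all $s,t\in\R$ implies $\int_\R F\int_\R G\le 2\pi\exp\bigl(-\tfrac12\bary(F)\bary(G)\bigr)$, and on its $n$-dimensional strengthening. This lemma is false, and the counterexample already lives in the log-concave, exactly polar setting to which you reduce. Take $F(s)=\e^{-\abs{s-a}}$ with $a>0$; then $F^\circ(t)=\inf_s \e^{-st}/F(s)=\e^{-at}\mathbf{1}_{[-1,1]}(t)$, so the pair $(F,G)=(F,F^\circ)$ satisfies your hypothesis. One computes $\int F=2$, $\int G=\tfrac{2\sinh a}{a}$, $\bary(F)=a$ and $\bary(G)=\tfrac1a-\coth a$, so your claimed bound reads
\[
\frac{4\sinh a}{a}\;\le\;2\pi\,\exp\Bigl(\frac{a\coth a-1}{2}\Bigr).
\]
As $a\to\infty$ the left-hand side grows like $2\e^a/a$ while the right-hand side grows like $(2\pi/\sqrt{\e})\,\e^{a/2}$; already at $a=5$ the left side is about $59.4$ and the right side about $46.4$. (The constant $\tfrac12$ in the exponent is forced on you by the Gaussian equality case, so it cannot be repaired by adjusting it.) Hence the ``stronger, self-improving statement'' you propose to carry through the induction fails already in dimension one, and the scheme collapses: the fibrewise correction $\exp(-\tfrac12\beta(x')\gamma(y'))$ cannot be absorbed into a global bilinear correction of this form.

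The difficulty you identify --- non-centred fibres destroying the polarity needed for the induction --- is real, but the paper resolves it by weakening the hypothesis rather than strengthening the conclusion: it introduces a free translation. Theorem~\ref{main} asserts that in every median hyperplane of $f$ there is a point $z$ such that the Santal\'o product is controlled for every $g$ dual to the translate $f_z$; the point $z$ is chosen as the intersection of the median with the line through the barycenters of the two half-space restrictions, which (after a shear) makes the relevant marginal on the hyperplane centred, so the induction closes using only the one-sided lemma on $\R_+$ (Lemma~\ref{geometric-mean}) applied separately on each half-space. The barycenter hypothesis on $f^\circ$ is then used exactly once, at the end, via $1\le\e^u-u$, to remove the resulting factor $\e^{\sca{y,z}}$. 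To salvage your outline you would need to replace the false quantitative correction by some such qualitative translation device.
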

In the special case where the function $f$ is even, this result follows from an earlier inequality of Keith Ball~\cite{these-ball}; and in \cite{fradelizi-meyer}, Fradelizi and Meyer prove something more general (see also~\cite{mon-fm}).
In the present note we prove the following:
\begin{theorem}
\label{monthm}
Let $f$ and $g$ be non-negative Borel functions on $\R^n$ satisfying the duality relation
\begin{equation}
\label{duality}
\forall x,y\in\R^n, \qquad f(x)g(y)\leq \e^{-\sca{x,y}} .
\end{equation}
If $f$ (or $g$) has its barycenter at $0$ then 
\begin{equation}
\label{main-ineq}
  \int_{\R^n}f (x) \, dx 
 \int_{\R^n}g(y) \, dy    \leq   (2\pi)^n .
\end{equation}
\end{theorem}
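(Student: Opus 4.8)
The plan is to argue by induction on the dimension $n$, following the slicing philosophy that is natural for product-type extremal inequalities. Before starting I would make the standard reductions: since the left-hand side of \eqref{main-ineq} only grows if we enlarge $g$, I would replace $g$ by $f^\circ$, so that one may assume $g=f^\circ$ and (by symmetry) $f=g^\circ$; then $f=\e^{-\phi}$ and $g=\e^{-\phi^*}$ are mutually polar, log-concave, and as regular as one likes. The Gaussian pair $f=g=\e^{-\abs{\cdot}^2/2}$ is self-dual and realizes equality, which fixes the target constant $(2\pi)^n$ and guides the choice of extremal configuration.

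For the inductive step I would split $\R^n=\R^{n-1}\times\R$, writing $x=(x',s)$, $y=(y',t)$ and setting $f_s(x')=f(x',s)$, $g_t(y')=g(y',t)$, together with the marginals $\phi(s)=\int_{\R^{n-1}}f_s$ and $\psi(t)=\int_{\R^{n-1}}g_t$, so that $\int_{\R^n}f=\int_\R\phi$ and $\int_{\R^n}g=\int_\R\psi$. Because $\sca{x,y}=\sca{x',y'}+st$, the duality \eqref{duality} gives $f_s(x')\,\bigl(\e^{st}g_t(y')\bigr)\leq\e^{-\sca{x',y'}}$, i.e. the pair $\bigl(f_s,\e^{st}g_t\bigr)$ is a dual pair in $\R^{n-1}$. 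Applying the induction hypothesis in dimension $n-1$ would yield $\phi(s)\,\e^{st}\psi(t)\leq(2\pi)^{n-1}$, hence
\[
\frac{\phi(s)}{(2\pi)^{(n-1)/2}}\cdot\frac{\psi(t)}{(2\pi)^{(n-1)/2}}\leq\e^{-st}.
\]
Thus the normalized marginals form a one-dimensional dual pair, and the $n=1$ case applied to them gives $\int\phi\int\psi\leq(2\pi)^{n}$, which is exactly \eqref{main-ineq}. The base case $n=1$ I would treat directly: for a mutually polar pair on $\R$ the centering $\bary(\psi)=0$ should pin down the extremal profile, and I expect to close it by an explicit substitution/optimization (or a one-dimensional Prékopa--Leindler argument) showing the Gaussian is optimal.

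The \textbf{main obstacle}, and the place where the naive scheme breaks, is the barycenter hypothesis. The induction hypothesis applied to $\bigl(f_s,\e^{st}g_t\bigr)$ requires one of these slices to be centred, whereas from $\bary(g)=0$ I only get the aggregate identity $\int_\R\beta(t)\psi(t)\,dt=0$, where $\beta(t)$ is the barycenter of the slice $g_t$ in $\R^{n-1}$; individual slices need \emph{not} be centred. I would try to circumvent this by proving a centring-free inductive statement, the natural candidate being $\int f\int g\leq(2\pi)^n\e^{-\sca{\bary(f),\bary(g)}}$, which follows from the centred case by the tilt--translate substitution $\tilde g(y)=g(y+b)$, $\tilde f(x)=\e^{\sca{b,x}}f(x)$ (preserving duality) together with Jensen's inequality. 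The difficulty is that feeding this tilted form into the slicing introduces an inconvenient bilinear term $\sca{\alpha(s),\beta(t)}$ (with $\alpha(s)=\bary(f_s)$) in the exponent, which is no longer a product of a function of $s$ and a function of $t$; controlling this term, or alternatively re-centring each slice and showing the corrections cancel against the global identity $\int\beta(t)\psi(t)\,dt=0$, is the crux of the whole argument, and is where I would concentrate the real work.
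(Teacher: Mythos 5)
Your overall strategy (induction on dimension, a one-dimensional Pr\'ekopa--Leindler-type lemma at the core, the Gaussian as the self-dual extremizer) is the right one, and you have correctly located the obstruction: the slices $(f_s,\e^{st}g_t)$ are dual pairs in $\R^{n-1}$ but are not centred, so the induction hypothesis does not apply to them. However, the proposal does not overcome this obstruction -- you explicitly defer ``the real work'' -- and neither of your suggested fixes closes the gap as stated: the centring-free form $\int f\int g\leq(2\pi)^n\e^{-\sca{\bary(f),\bary(g)}}$ does follow from the centred case by tilting and Jensen, but when fed back into the slicing it produces exactly the non-product term $\sca{\alpha(s),\beta(t)}$ you mention, and the aggregate identity $\int\beta(t)\psi(t)\,dt=0$ is too weak to cancel it. (A smaller slip: you cannot simultaneously replace $g$ by $f^\circ$ and $f$ by $g^\circ$ without risking moving the barycenter of whichever function is assumed centred; only the one-sided enlargement of the \emph{un}centred function is safe.) So this is a genuine gap, at precisely the step on which the whole proof turns.

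The paper resolves it by changing the statement being inducted on and by reversing the order of your two reductions. The inductive statement (Theorem~\ref{main}) asserts that for any hyperplane $H$ cutting off a fraction $\lambda$ of the mass of $f$ there is a point $z\in H$ -- the intersection of $H$ with the line through the barycenters $b_+,b_-$ of $f$ restricted to the two half-spaces -- such that every $g$ dual to the translate $f(z+\cdot)$ satisfies $\int f\int g\leq\frac{1}{4\lambda(1-\lambda)}(2\pi)^n$. In the inductive step one first applies the one-dimensional logarithmic Pr\'ekopa--Leindler lemma \emph{fiberwise}, along the sheared direction $v=b_+/\sca{b_+,e_n}$ rather than along $e_n$: this needs no centring because it is a one-sided statement on $\R_+$, and the shear is chosen precisely so that the resulting function $F_+(y)=\int_{\R_+}f(y+sv)\,ds$ on $H$ has barycenter $0$. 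One then applies the $(n-1)$-dimensional theorem to the pair $(F_+,G_+)$ on $H$, obtains $\int_{H_\pm}g\circ B\leq\frac{1}{4\lambda}(2\pi)^n$ and $\frac{1}{4(1-\lambda)}(2\pi)^n$, and adds. The hypothesis $\bary(g)=0$ is used only once, at the very end, to remove the translation $z$ produced by the $\lambda=\tfrac12$ case, via $1\leq\e^{\sca{y,z}}-\sca{y,z}$ integrated against $g$. If you want to complete your argument, adopting this strengthened, translation-flexible inductive statement (and the barycentric choice of $z$ together with the shear) is the missing idea.
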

This is slightly stronger than Theorem~\ref{AKM} in which the function that has its barycenter at $0$ should be log-concave. The point of this note is not really this improvement, but rather to present a simple proof of Theorem~\ref{AKM}.
Theorem~\ref{monthm} yields an improved Blaschke-Santal\'o inequality, obtained by Lutwak in~\cite{lutwak}, with a completely different approach.
\begin{corollary}
Let $S$ be a star-shaped (with respect to $0$) body in $\R^n$ having its centroid at $0$. Then
\begin{equation}
\label{star}
\vol_n (S) \vol_n (S^\circ) \leq v_n^2 .
\end{equation}
\end{corollary}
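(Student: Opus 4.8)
The plan is to reduce \eqref{star} to Theorem~\ref{monthm} by feeding it a pair of Gaussian functions adapted to $S$ and $S^\circ$. Let $\norm{x}_S = \inf\{\lambda>0 \tq x\in\lambda S\}$ denote the gauge of $S$, so that $S=\{x \tq \norm{x}_S\le 1\}$ and $\vol_n(\{\norm{\cdot}_S\le r\})=r^n\vol_n(S)$. Since the polar body satisfies $\norm{y}_{S^\circ}=\sup_{x\in S}\sca{x,y}$, one gets by homogeneity the fundamental inequality
\[ \sca{x,y}\le \norm{x}_S\,\norm{y}_{S^\circ}, \qquad x,y\in\R^n. \]
Note that this holds for $S$ merely star-shaped, since $S^\circ$ is defined through a supremum; convexity of $S$ is not needed.

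Next I would set $f(x)=\e^{-\frac12\norm{x}_S^2}$ and $g(y)=\e^{-\frac12\norm{y}_{S^\circ}^2}$. The duality relation \eqref{duality} is then immediate: using $a^2+b^2\ge 2ab$ together with the inequality above,
\[ f(x)g(y)=\e^{-\frac12(\norm{x}_S^2+\norm{y}_{S^\circ}^2)}\le \e^{-\norm{x}_S\norm{y}_{S^\circ}}\le \e^{-\sca{x,y}}. \]
To apply Theorem~\ref{monthm} it remains to check that $f$ has its barycenter at $0$. Writing $f(x)=\rho(\norm{x}_S)$ with $\rho(r)=\e^{-r^2/2}$ and integrating by parts in the radial variable, $\int_{\R^n} x\, f(x)\,dx = \int_0^\infty(-\rho'(r))\bigl(\int_{\norm{x}_S\le r}x\,dx\bigr)\,dr$; the scaling $x\mapsto rx$ gives $\int_{\norm{x}_S\le r}x\,dx=r^{n+1}\int_S z\,dz=r^{n+1}\vol_n(S)\bary(S)=0$, because $S$ has its centroid at $0$. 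Hence $\bary(f)=0$.

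Finally I would compute the two integrals by the same radial formula. With $I_n=\int_0^\infty \e^{-r^2/2}r^{n-1}\,dr$ one has $\int f = n\vol_n(S)\,I_n$ and $\int g = n\vol_n(S^\circ)\,I_n$, so Theorem~\ref{monthm} yields $n^2I_n^2\,\vol_n(S)\vol_n(S^\circ)\le(2\pi)^n$. The constant is then identified by applying the very same formula to the Euclidean ball $D$: since $\int_{\R^n}\e^{-\abs{x}^2/2}\,dx=(2\pi)^{n/2}$ equals $n v_n I_n$, we get $n I_n=(2\pi)^{n/2}/v_n$, whence $\vol_n(S)\vol_n(S^\circ)\le (2\pi)^n/(n^2I_n^2)=v_n^2$, which is \eqref{star}.

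The only genuinely delicate step is the barycenter verification: it is what turns the centroid hypothesis on $S$ into the hypothesis of Theorem~\ref{monthm}, and it relies on the exact $r^{n+1}$ scaling of the first moment of the dilates $rS$. Everything else is the elementary polar inequality and a one-dimensional Gaussian computation; I would stress that the Gaussian profile is not chosen to optimize anything, but simply because it saturates \eqref{duality} for the ball, which is exactly what makes the final constant come out as $v_n^2$.
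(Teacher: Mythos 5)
Your proof is correct and is essentially identical to the paper's: both reduce \eqref{star} to Theorem~\ref{monthm} applied to the Gaussian profiles $\e^{-\frac12 N_S^2}$ and $\e^{-\frac12 N_{S^\circ}^2}$, verify \eqref{duality} via $\sca{x,y}\le N_S(x)N_{S^\circ}(y)\le \tfrac12 N_S(x)^2+\tfrac12 N_{S^\circ}(y)^2$, and identify the constant by specializing the radial integration formula to the Euclidean ball. The only difference is that you spell out the barycenter and volume computations that the paper leaves as ``easy to see,'' and your details are accurate.
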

\begin{proof}
Let $N_S (x) = \inf \{ r>0 \tq x\in r S \}$ be the gauge of $S$ and $\phi_S = \exp \bigl( -\frac{1}{2} N^2_S \bigr)$. Integrating $\phi_S$ and the indicator function of $S$ on level sets of $N_S$, it is easy to see that $\int_{\R^n} \phi_S = c_n \vol_n(S)$ for some constant $c_n$ depending only on the dimension. Replacing $S$ by the Euclidean ball in this equality yields $c_n = (2\pi)^{n/2} v_n^{-1}$. Therefore it is enough to prove that 
\begin{equation}
\label{phiS}
 \int \phi_S \int \phi_{S^\circ} \leq (2\pi)^n .
\end{equation}
Similarly, it is easy to see that $\bary(\phi_S)=c'_n \bary(S)=0$. Besides, we have $\sca{x,y} \leq N_S(x) N_{S^\circ} (y) \leq \tfrac{1}{2} N_S(x)^2 + \tfrac{1}{2} N_{S^\circ}(y)^2$, for all $x,y\in\R^n$. Thus $\phi_S$ and $\phi_{S^\circ}$ satisfy \eqref{duality}, then by Theorem~\ref{monthm} we get \eqref{phiS}.
\end{proof}
\section{Main results}
\begin{theorem}
\label{main}
Let $f$ be a non-negative Borel function on $\R^n$ having a barycenter. Let $H$ be an affine hyperplane splitting $\R^n$ into two half-spaces $H_+$ and $H_-$. Define $\lambda\in [0,1]$ by $\lambda \int_{\R^n} f =\int_{H_+} f$. Then there exists $z\in \R^n$ such that for every non-negative Borel function $g$
\begin{equation}
 \label{lambda}
\bigl( \forall x,y \in\R^n, \; f(z+x)g(y) \leq \e^{-\sca{x,y}} \bigr) \quad \Rightarrow \quad
\int_{\R^n} f \int_{\R^n} g  \leq \frac{1}{4\lambda(1-\lambda)} (2\pi)^n.
\end{equation}
In particular, in every median $H$ ($\lambda=\frac{1}{2}$) there is a point $z$ such that for all $g$
\begin{equation}
\label{median} 
\bigl( \forall x,y \in\R^n, \; f(z+x)g(y) \leq \e^{-\sca{x,y}} \bigr) \quad \Rightarrow \quad
\int_{\R^n} f \int_{\R^n} g  \leq (2\pi)^n.
\end{equation}
\end{theorem}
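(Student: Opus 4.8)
The plan is to trade the quantifier over $g$ for a single scalar inequality, and then induct on $n$. For a fixed $z$, the pointwise-largest $g$ compatible with the constraint $f(z+x)g(y)\le\e^{-\sca{x,y}}$ is the polar of the translate $f(z+\cdot)$, and a one-line change of variables gives $\bigl(f(z+\cdot)\bigr)^\circ(y)=\e^{\sca{z,y}}f^\circ(y)$. Hence the implication in \eqref{lambda} holds for \emph{every} $g$ if and only if the single inequality $\int_{\R^n}f\cdot\int_{\R^n}\e^{\sca{z,\cdot}}f^\circ\le\frac{1}{4\lambda(1-\lambda)}(2\pi)^n$ holds, so it suffices to exhibit one $z$ realising it. After a rotation I may assume $H=\{x_1=a\}$, $H_+=\{x_1>a\}$; the point $z$ will have first coordinate $a$, and choosing the remaining coordinates $z'$ is where the barycentre hypothesis is spent.

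I would run the induction so that the direction $e_1$ perpendicular to $H$ is treated by a sharp one-dimensional inequality and the remaining $n-1$ directions by the inductive hypothesis (in its median form). The base case $n=1$ rests on the one-sided estimate that for any $F\ge 0$ on $\R$ one has $\int_0^\infty F\cdot\int_0^\infty F^\circ\le\frac{\pi}{2}$ (and the reflected statement on $(-\infty,0]$), with equality for the centred Gaussian. Taking $z=a$ turns the $1$-dimensional claim into $\int_{\R}F\cdot\int_\R F^\circ\le\frac{2\pi}{4\lambda(1-\lambda)}$ for $F=f(a+\cdot)$ (whose mass on $(0,\infty)$ is $\lambda\int F$), which follows by adding the two one-sided estimates. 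To prove the one-sided estimate I would first reduce to log-concave $F$: since $F\le F^{\circ\circ}$ and $(F^{\circ\circ})^\circ=F^\circ$, replacing $F$ by its bipolar only increases the left factor. Writing $F=\e^{-\phi}$ with $\phi$ convex, the substitution $y=\phi'(x)$ together with the Legendre identity $\phi(x)+\phi^*(\phi'(x))=x\phi'(x)$ and the elementary inequality $2uv\le u^2+v^2$ produces the constant $\pi/2$. The subtle point here is that the crude bound $F(x)F^\circ(y)\le\e^{-xy}$ integrated over the quadrant $x,y\ge 0$ \emph{diverges}, so the infimum defining $F^\circ$ must genuinely be exploited along the tangency curve $y=\phi'(x)$.

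For the inductive step, write points as $(x_1,x')$, $(y_1,y')$ and introduce the marginal $\mu(x_1)=\int_{\R^{n-1}}f(x_1,x')\,dx'$, so that $\int_a^\infty\mu=\lambda\int f$ and $\int f=\int\mu$. The key slicing identity is $f^\circ(y_1,\cdot)=(h_{y_1})^\circ$, the $(n-1)$-dimensional polar of $h_{y_1}(x')=\sup_{x_1}\e^{x_1y_1}f(x_1,x')$. Applying the median case of Theorem~\ref{main} in dimension $n-1$ to each $h_{y_1}$ gives $\int_{\R^{n-1}}(h_{y_1})^\circ\cdot\int_{\R^{n-1}}h_{y_1}\le(2\pi)^{n-1}$; and since $\tfrac{1}{\mu^\circ(y_1)}=\sup_{x_1}\e^{x_1y_1}\mu(x_1)\le\int_{\R^{n-1}}\sup_{x_1}\e^{x_1y_1}f(x_1,x')\,dx'=\int_{\R^{n-1}}h_{y_1}$, integrating over $y_1$ yields the clean marginal inequality
\[
 \int_{\R^n}f^\circ\ \le\ (2\pi)^{n-1}\int_{\R}\mu^\circ .
\]
Multiplying this by $\int f=\int\mu$ and invoking the one-dimensional estimate for $\mu$ then closes the induction with exactly the factor $\frac{1}{4\lambda(1-\lambda)}(2\pi)^n$; I have checked that every inequality in the chain is an equality for the standard Gaussian, which is the right sanity check.

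The hard part is the translation: the median bound $\int(h_{y_1})^\circ\cdot\int h_{y_1}\le(2\pi)^{n-1}$ is only valid once the slice polar $(h_{y_1})^\circ$ has its barycentre at the origin, and translating $f$ in the $x'$ variables by $z'$ multiplies \emph{each} $(h_{y_1})^\circ$ by $\e^{\sca{z',y'}}$, shifting all of their barycentres \emph{simultaneously}. Thus I must produce a single vector $z'$ that centres the whole one-parameter family $\{(h_{y_1})^\circ\}_{y_1\in\R}$ at once; this is precisely the point where the hypothesis that $f$ has a barycentre is indispensable, and it is the step I expect to be delicate, since a naive per-slice recentring would require $z'$ to depend on $y_1$. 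My approach would be to choose $z$ so that the full polar $\e^{\sca{z,\cdot}}f^\circ$ is centred on $\R^n$ and to show, using the log-convexity of $z\mapsto\int\e^{\sca{z,y}}f^\circ(y)\,dy$ (finite by the barycentre assumption) together with a measurable selection of the slice-centres, that this global centring forces the slice-wise median bound to hold on average strongly enough to recover the displayed marginal inequality.
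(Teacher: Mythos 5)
Your overall architecture (reduce to the extremal $g=(f_z)^\circ=\e^{\sca{z,\cdot}}f^\circ$, induct on dimension, handle the normal direction by a sharp one-sided one-dimensional estimate worth $\pi/2$ per half-line) is sound, and your base case is essentially the paper's: the one-sided estimate is exactly Lemma~\ref{geometric-mean}, which the paper proves cleanly by an exponential change of variables and Pr\'ekopa--Leindler rather than by your (sketchy, but repairable) Legendre-substitution argument. The fatal problem is the inductive step. Your chain needs the per-slice bound $\int_{\R^{n-1}}(h_{y_1})^\circ\cdot\int_{\R^{n-1}}h_{y_1}\le(2\pi)^{n-1}$ \emph{for every} $y_1$ after a \emph{single} translation $z'$, and you correctly identify this as the delicate point --- but it is not merely delicate, it is false. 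Take the sheared Gaussian $f(x_1,x')=\exp\bigl(-\tfrac12 x_1^2-\tfrac12\abs{x'-cx_1e}^2\bigr)$ with $e$ a unit vector and $c\neq 0$. A direct computation gives $\int h_{y_1}\cdot\int(h_{y_1})^\circ=(2\pi)^{n-1}\exp\bigl(\tfrac{c^2y_1^2}{2(1+c^2)}\bigr)>(2\pi)^{n-1}$ for $y_1\neq0$: the barycenter of $(h_{y_1})^\circ$ moves affinely with $y_1$, so no single $z'$ recenters the whole family, and the slice-wise estimate genuinely fails. Consequently the pointwise inequality $\int(h_{y_1})^\circ\le(2\pi)^{n-1}\mu^\circ(y_1)$ also fails for large $y_1$ in this example (the left side decays like $\e^{-y_1^2/(2(1+c^2))}$, the right like $\e^{-y_1^2/2}$), even though the \emph{integrated} marginal inequality happens to hold with equality there. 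Your proposed rescue via log-convexity of $z\mapsto\int\e^{\sca{z,y}}f^\circ(y)\,dy$ and a measurable selection of slice-centres is not an argument; it is precisely the missing proof.

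The paper avoids this trap by never slicing $f^\circ$ at all. It chooses the special point $z$ as the intersection of $H$ with the line through the barycenters $b_+$ and $b_-$ of $f$ restricted to the two half-spaces, and then integrates $f$ along the \emph{skew} direction $v=b_+/\sca{b_+,e_n}$ (not along $e_n$) while integrating $g$ along $e_n$; the linear maps $A$ and $B=(A^{-1})^t$ make these two directions dual, so Lemma~\ref{geometric-mean} applied fibrewise produces a \emph{single} pair of functions $F_+,G_+$ on $H$ satisfying the $(n-1)$-dimensional duality with constant $\tfrac{\pi}{2}$. Only $F_+$ needs a barycenter at the origin, and the choice of $z$ and of the skew projection $P$ guarantees $\bary(F_+)=P(b_+)=0$ in one stroke. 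That adaptive choice of slicing direction --- which absorbs exactly the shear that kills your slice-by-slice approach --- is the idea your proposal is missing, and without it the induction does not close.
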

A similar result concerning convex bodies (instead of functions) was obtained by Meyer and Pajor in \cite{meyer-pajor2}. 

Let us derive Theorem~\ref{monthm} from the latter.
Let $f,g$ satisfy \eqref{duality}. Assume for example that $\bary(g) = 0$, then $0$ cannot be separated from the support of $g$ by a hyperplane, so there exists $x_1,\dots, x_{n+1} \in \R^n$ such that $0$ belongs to the interior of $\conv\{x_1 \dots x_{n+1}\}$ and $g(x_i) >0$ for $i=1\dots n+1$. Then \eqref{duality} implies that $f(x) \leq C \e^{- \norm{x} }$, for some $C>0$, where  $\norm{x} = \max \bigl( \sca{x,x_i} \tq i\leq n+1 \bigr)$. Assume also that $\int f >0$, then $f$ has a barycenter. Apply the ``$\lambda=1/2$'' part of Theorem~\ref{main} to $f$. There exists $z\in\R^n$ such that \eqref{median} holds. On the other hand, by \eqref{duality}
\[ f(z+x) g(y) \e^{\sca{y,z} } \leq \e^{- \sca{z+x,y} } \e^{ \sca{y,z} } = \e^{- \sca{x,y} } \]
for all $x,y \in \R^n$. Therefore 
\begin{equation}
 \label{etapez}
\int_{\R^n} f(x) \, dx \int_{\R^n} g(y) \e^{\sca{y,z}} \, dy \leq (2\pi)^n .
\end{equation}
Integrating with respect to $g(y) dy$ the inequality $1 \leq \e^{\sca{y,z}}- \sca{y,z}$ we get
\[ \int_{\R^n} g(y) \, dy  \leq
\int_{\R^n} g(y)  \e^{\sca{y,z}}  \, dy - \int_{\R^n} \sca{y,z} g(y) \, dy . \]
Since $\bary(g)=0$, the latter integral is $0$ and together with \eqref{etapez} we obtain \eqref{main-ineq}. Observe also that this proof shows that Theorem~\ref{main} in dimension $n$ implies Theorem~\ref{monthm} in dimension $n$.

In order to prove Theorem~\ref{main}, we need the following logarithmic form of the Pr\'ekopa-Leindler inequality. For details on Pr\'ekopa-Leindler, we refer to \cite{ball}.
\begin{lemma}
\label{geometric-mean}
Let $\phi_1,\phi_2$ be non-negative Borel functions on $\R_+$. If $\phi_1(s) \phi_2(t) \leq \e^{-st}$
for every $s,t$ in $\R_+$, then 
\begin{equation}
\label{gm}
\int_{\R_+}  \phi_1(s) \, ds \int_{\R_+}  \phi_2(t) \, dt \leq \frac{\pi}{2} .
\end{equation}
\end{lemma}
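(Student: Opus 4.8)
The plan is to deduce the lemma from the (classical) Prékopa–Leindler inequality after a logarithmic change of variables, which turns the bilinear constraint $st$ into the additive constraint appearing in Prékopa–Leindler and transports the extremal Gaussian to the function naturally attached to that constraint.

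First I would pass from $\R_+$ to $\R$ by setting $s=\e^{x}$ and $t=\e^{y}$. Writing $I_1=\int_{\R_+}\phi_1$ and $I_2=\int_{\R_+}\phi_2$, and defining
\[ u(x)=\e^{x}\,\phi_1(\e^{x}), \qquad v(y)=\e^{y}\,\phi_2(\e^{y}), \]
the substitution $ds=\e^{x}\,dx$ gives $\int_\R u=I_1$ and $\int_\R v=I_2$, and $u,v$ are Borel. Multiplying the hypothesis $\phi_1(s)\phi_2(t)\le\e^{-st}$ by $\e^{x+y}$ turns it into
\[ u(x)\,v(y)\le \e^{x+y}\,\e^{-\e^{x+y}} =: \theta(x+y), \]
so that the product $u(x)v(y)$ is controlled by a function of the single variable $x+y$.

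Next I would feed this into Prékopa–Leindler with $\lambda=\tfrac12$, choosing the ``large'' function to be the explicit extremizer: set $h(m)=\sqrt{\theta(2m)}=\e^{m}\,\e^{-\e^{2m}/2}$. For any $x,y$ with $\tfrac{x+y}{2}=m$ we then have $\sqrt{u(x)v(y)}\le\sqrt{\theta(x+y)}=h(m)$, which is exactly the geometric-mean hypothesis of Prékopa–Leindler; its conclusion reads $\int_\R h\ge\sqrt{\int_\R u\int_\R v}=\sqrt{I_1I_2}$. Finally the constant is computed by the substitution $r=\e^{m}$, under which $\int_\R \e^{m}\,\e^{-\e^{2m}/2}\,dm=\int_{\R_+}\e^{-r^2/2}\,dr=\sqrt{\pi/2}$; squaring $\sqrt{\pi/2}\ge\sqrt{I_1I_2}$ yields $I_1I_2\le\pi/2$.

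The conceptual obstacle is that Prékopa–Leindler produces a \emph{lower} bound for $\int h$, whereas we want an \emph{upper} bound for $I_1I_2$; this is resolved precisely by taking $h$ to be the explicit function $\sqrt{\theta(2\,\cdot)}$, whose integral can be evaluated in closed form, so the lower bound on $\int h$ becomes the desired upper bound on $\sqrt{I_1I_2}$. The only routine points are the measurability of $u,v$ and a harmless reduction (if either integral vanishes the statement is trivial; otherwise finiteness is forced a posteriori, since $\int h=\sqrt{\pi/2}$ is finite). The real work is hidden in identifying $\theta$, and in recognizing $h$ as the image of the Gaussian under the logarithmic change of variables; once these are guessed everything else is a direct computation, and the equality case $\phi_1(s)=\phi_2(s)=\e^{-s^2/2}$ confirms that the constant $\pi/2$ is sharp.
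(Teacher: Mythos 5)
Your proposal is correct and is essentially the paper's own proof: the same logarithmic substitution $s=\e^x$, the same functions $u,v$ and the same midpoint function $h(m)=\e^{m}\e^{-\e^{2m}/2}$, followed by Pr\'ekopa--Leindler with $\lambda=\tfrac12$ and the back-substitution $r=\e^{m}$ giving $\int_{\R_+}\e^{-r^2/2}\,dr=\sqrt{\pi/2}$. All steps check out, including the identification of the extremizer, so there is nothing to add.
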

\begin{proof}
Let $f(s) = \phi_1 (\e^s) \e^s$, $g (t) = \phi_2 (\e^t) \e^t$ and $h(r) = \exp (-\e^{2r}/2) \e^r$. For all $s,t\in\R$ we have $\sqrt{f(s)g(t)}\leq h(\frac{t+s}{2})$, hence by Pr\'ekopa-Leindler $\int_\R f \int_\R g \leq \bigl( \int_\R h \bigr)^2$. By change of variable, this is the same as $\int_{\R_+}  \phi_1 \int_{\R_+}  \phi_2 \leq 
 \bigl( \int_{\R_+} \e^{-u^2/2} \, du  \bigl)^{2}$ which is the result.
\end{proof}
\section{Proof of Theorem~\ref{main}}
Clearly we can assume that $\int f =1$. Let $\mu$ be the measure with density $f$. In the sequel we let $f_z(x) = f(z+x)$ for all $x,z$.

We prove the theorem by induction on the dimension. Let $f$ be a non-negative Borel function on the line, let $r\in \R$ and $\lambda = \mu\bigl([r,\infty)\bigr) \in[0,1]$. Let $g$ satisfy $f(r+s) g(t) \leq \e^{-st}$, for all $s,t$. Apply Lemma~\ref{geometric-mean} twice: first to $\phi_1(s)=f(r+s)$ and $\phi_2(t)= g(t)$ then to $\phi_1(s)=f(r-s)$ and $\phi_2(t)= g(-t)$. Then
\[ \int_{\R_+} f_r   \int_{\R_+} g   \leq \frac{\pi}{2}  \qquad \text{and} \qquad 
 \int_{\R_-} f_r   \int_{\R_-} g \leq \frac{\pi}{2} . \]
Therefore $\int_{\R_+} g   \leq \frac{\pi}{2\lambda}$ and $\int_{\R_-} g   \leq \frac{\pi}{2 (1-\lambda)}$, which yields the result in dimension $1$. \\
Assume the theorem to be true in dimension $n-1$. Let $H$ be an affine hyperplane splitting $\R^n$ into two half-spaces $H_+$ and $H_-$ and let $\lambda=\mu(H_+)$. Provided that $\lambda\neq 0,1$ we can define $b_+$ and $b_-$ to be the barycenters of $\mu_{|H_+}$ and $\mu_{|H_-}$, respectively. Since $\mu (H) = 0$, the point $b_ +$ belongs to the interior of $H_+$, and similarly for $b_-$. Hence the line passing through $b_{+}$ and $b_{-}$ intersects $H$ at one point, which we call $z$. Let us prove that $z$ satisfies \eqref{lambda}, for all $g$. Clearly, replacing $f$ by $f_z$ and $H$ by $H-z$, we can assume that $z=0$. Let $g$ satisfy 
\begin{equation}
 \label{reduality}
\forall x,y\in \R^n, \qquad f(x) g(y) \leq \e^{- \sca{x,y}} . 
\end{equation}
Let $e_1,\dots,e_n$ be an orthonormal basis of $\R^n$ such that $H = e_n^\perp$ and $\sca{b_+,e_n}>0$. Let
$v = b_+ / \sca{b_+ , e_n}$ and $A$ be the linear operator on $\R^n$ that maps $e_n$ to $v$ and $e_i$ to itself for $i=1\dots n-1$ and let $B=( A^{-1} )^t$. Define
\[ F_+: y\in H \mapsto \int_{\R_+} f( y + s v) \, ds  \qquad \text{and} \qquad
G_+: y'\in H  \mapsto \int_{\R_+} g (B y' +te_n) \, dt . \] 
By Fubini, and since $A$ has determinant $1$, $\int_H F_+  = \int_{H_+} f\circ A=\mu(H_+)=\lambda$. Also, letting $P$ be the projection with range $H$ and kernel $\R v$, we have
\[  \bary(F_+) = \frac{1}{\lambda} \int_{H_+} P(Ax) f(Ax) \, dx = \frac{1}{\lambda} P \Bigl( \int_{H_+} x f(x) \, dx \Bigr) = P ( b_+ ) , \] 
and this is $0$ by definition of $P$. Since $\sca{Ax,Bx'}= \sca{x,x'}$ for all $x,x' \in \R^n$, we have $ \sca{ y+sv  , By'+te_n } = \sca{y,y'} +st$ for all $s,t\in \R$ and $y,y'\in H$. So \eqref{reduality} implies
\[  f ( y + sv ) g ( By'+te_n)  \leq \e^{-st- \sca{y,y'} } . \]
Applying Lemma~\ref{geometric-mean} to $\phi_1(s)= f(y+sv)$ and $\phi_2(t)=g(By'+te_n)$ we get $F_+(y)  G_+ (y')  \leq  \frac{\pi}{2}  \e^{-\sca{y,y'}}$ for every $y,y'\in H$. Recall that $\bary(F_+)=0$, then by the induction assumption (which implies Theorem~\ref{monthm} in dimension $n-1$)
\begin{equation}
\label{etape3}
 \int_H F_+  \int_H G_+  \leq  \frac{\pi}{2} (2\pi)^{n-1} .
\end{equation}
hence $\int_{H_+} g (Bx) \, dx  \leq  \frac{1}{4\lambda} (2\pi)^n$. In the same way $ \int_{H_-} g (Bx) \, dx  \leq  \frac{1}{4(1-\lambda)} (2\pi)^n $, adding these two inequalities, we obtain 
\[ \int_{\R^n} g(Bx) \, dx \leq \frac{1}{4\lambda(1-\lambda)} (2\pi)^n \]
which is the result since $B$ has determinant $1$.

\end{document}